\newtheorem{thm}{Theorem}[section]
\newtheorem{cor}[thm]{Corollary}
\def\fo{\mathfrak{o}}
\def\fp{\mathfrak{p}}
\def\Hom{\mathrm{Hom}}
\def\dep{\mathrm{dep}}
\def\F{\mathbb{F}}
\def\C{\mathbb{C}}
\def\W{\mathbf{W}}
\begin{document}

\title[Depth]{Note on the non-preservation of depth}
%with $K$ a local function field of characteristic $2$

%\author[A.-M. Aubert]{Anne-Marie Aubert}
%\address{Institut de Math\'ematiques de Jussieu -- Paris Rive Gauche, 
%U.M.R. 7586 du C.N.R.S., U.P.M.C., 4 place Jussieu 75005 Paris, France}
%\email{anne-marie.aubert@imj-prg.fr}
%\author[S. Mendes]{Sergio Mendes}
%\address{ISCTE - Lisbon University Institute\\    Av. das For\c{c}as Armadas\\     1649-026, Lisbon\\   Portugal}
%\email{sergio.mendes@iscte.pt}
\author[R. Plymen]{Roger Plymen}
\address{School of Mathematics, Southampton University, Southampton SO17 1BJ,  England
\emph{and} School of Mathematics, Manchester University, Manchester M13 9PL, England}
\email{r.j.plymen@soton.ac.uk \quad roger.j.plymen@manchester.ac.uk}
%\author[M. Solleveld]{Maarten Solleveld}
%\address{IMAPP, Radboud Universiteit, Heyendaalseweg 135, 6525AJ, Nijmegen, the Netherlands}
%\email{m.solleveld@science.ru.nl}

\keywords{Local field, depth}
%\classno{20G05, 22E50}
\date{\today}
\maketitle

\begin{abstract}  
Let $K$ be a local field of characteristic $p$.    We consider the local Langlands correspondence for tori, and construct examples for which depth is not preserved. 
\end{abstract}

%\tableofcontents

\section{Introduction}  Let $K$ be a local non-archimedean field.   Let $T = R_{L/K} \mathbb{G}_m$ be an induced
torus, when $L$ is a finite separable extension of $K$.  The LLC (Local Langlands Correspondence) for tori induces an isomorphism
\[
\lambda_T : \Hom(T(K), \C^\times) \simeq \mathrm{H}^1(\W_K, T^\vee)
\]
where $\W_K$ is the Weil group of $K$ and $T^\vee$ is the complex dual torus, see \cite{MP} and \cite{Yu}.   

  For background material on depth, see \cite{ABPS}.     
 Concerning depth-preservation, we have the theorem of Yu \cite[\S 7.10]{Yu}:  In the LLC for tori, if $T$ splits over a tamely ramified extension, then we have 
 \[
 \dep(\chi) = \dep (\lambda_T(\chi)).
 \] 
 
Mishra and Patanayak \cite{MP} have recently constructed, in characteristic $0$, an explicit example of a wildly ramified torus for which depth is not preserved under LLC for all positive depth characters. 

We produce explicit examples, in characteristic $p$,  of  wildly ramified tori for which depth is not preserved under LLC for all positive depth characters.
%We construct examples in characteristic $p$, for which, in the LLC for tori, depth-preservation can fail. 
In particular, let $K$ be a local field of characteristic $2$, and let $L/K$ be a totally ramified quadratic extension: 
there are countably many of these, with ramification breaks given by 
$m = 1, 3, 5, 7, \ldots$.  In the LLC for tori, the depth, for all positive depth characters, is not preserved.       

We wish to thank Maarten Solleveld for pointing out an error in the first version of this Note, and for several valuable comments.

\section{On depth}  
Let $K$ be a local field of characteristic $p$.  
% The field of constants $k=\F_q$ is a finite extension of $\F_p$, with degree $[k:\F_p]=f$ and $q=p^f$.
Let $\mathfrak{o}$ be the ring of integers in $K$ and $\mathfrak{p}\subset\mathfrak{o}$ the maximal ideal. 
Let $\wp(x) = x^p - x$.   

%The unramified degree $p$  extension of $K$ is $K(\wp^{-1}(\fo))$. 

Let $\overline{K} = K/\wp(K)$.   Let $D \neq \overline{\fo}$ be an $\F_p$-line in $\overline{K}$, $m$ the integer such that $D \subset \overline{{\fp}^{-m}}$ but 
$D\not\subset \overline{{\fp}^{-m+1}}$; we know that $m$ is $>0$ and prime to $p$.   Fix an element $a \in \fp^{-m}$ whose image generates $D$, let $\alpha$ be a root of $T^p - T - a$ (in an algebraic closure of 
$K$), and let $L = K(\alpha) = K(\wp^{-1}(D))$.   See \cite[\S 6]{Da}.   

The extension $L/K$ is totally (and wildly) ramified.   The unique ramification break of the degree $p$ cyclic extension $L/K$ occurs at $m$, see \cite[\S 6]{Da}.  Set $T = L^\times$, then $T$ is a wildly ramified
torus.   

\begin{thm}  Let $K$ be a local field of characteristic $p$, let $L = K(\wp^{-1}(D))$ as above, let $\chi$ be any character of $T$ of positive depth.      In the local Langlands correspondence for tori, the depth of the 
character $\chi$ is not preserved.     
\end{thm}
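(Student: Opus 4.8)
The plan is to write both depths in terms of one integer $n$ attached to $\chi$ and to see that the Herbrand function of $L/K$, which has a genuine break at $m$, forces them apart. Set $e=e_{L/K}=p$. Since $\chi$ has positive depth, $\chi|_{\fo_L^\times}$ is a nontrivial smooth character which is nontrivial on $1+\fp_L$; let $n\ge1$ be the largest integer with $\chi|_{1+\fp_L^{\,n}}\ne1$, so that $\chi$ is trivial on $1+\fp_L^{\,n+1}$ but not on $1+\fp_L^{\,n}$ (depth zero would be the case $n=0$).

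\emph{Automorphic side.} I would use that the Moy--Prasad filtration of $T=R_{L/K}\mathbb G_m$ is $T(K)_r=1+\fp_L^{\,\lceil er\rceil}$ for $r>0$: the cocharacters of $T$ trivialise only over $L$, but the filtration is normalised by the valuation of $K$, and this is exactly what inserts the factor $e$ (since $\val_K(\sigma x-1)=e^{-1}\val_L(x-1)$ for $x\in\fo_L^\times$, $\sigma\in\Gal(L/K)$; see \cite{ABPS}). Hence $\dep(\chi)=n/e=n/p$.

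\emph{Galois side.} Here $T^\vee=\Ind_{\W_L}^{\W_K}\C^\times$, and Shapiro's lemma together with local class field theory for $L$ identify $\lambda_T$ with the isomorphism $\Hom(L^\times,\C^\times)\cong H^1(\W_L,\C^\times)\cong H^1(\W_K,T^\vee)$, carrying $\chi$ to the character $\chi_{\W}$ of $\W_L$ with $\chi_{\W}\circ\mathrm{Art}_L=\chi$ (see \cite{MP},\cite{Yu}). I would then combine: (i) $\mathrm{Art}_L$ maps $1+\fp_L^{\,j}$ onto $\W_L^{\,j}$ in upper numbering, so $\chi_{\W}$ has $\W_L$-ramification depth $n$; (ii) Shapiro's isomorphism is compatible with restriction to subgroups (a small Mackey computation when the subgroup is not contained in $\W_L$), so $\lambda_T(\chi)|_H$ is trivial iff $\chi_{\W}|_{H\cap\W_L}$ is trivial, for $H\subseteq\W_K$; and (iii) the Herbrand relation $\W_K^{\,v}\cap\W_L=\W_L^{\,\psi_{L/K}(v)}$, hence $\W_K^{\,r+}\cap\W_L=\W_L^{\,\psi_{L/K}(r)+}$, where $\psi_{L/K}=\phi_{L/K}^{-1}$. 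Together these give: $\lambda_T(\chi)$ is trivial on $\W_K^{\,r+}$ iff $\chi_{\W}$ is trivial on $\W_L^{\,\psi_{L/K}(r)+}$, i.e. iff $\psi_{L/K}(r)\ge n$, i.e. iff $r\ge\phi_{L/K}(n)$; so $\dep(\lambda_T(\chi))=\phi_{L/K}(n)$.

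\emph{Conclusion.} Since $L/K$ is cyclic of degree $p$ with its unique ramification break at $m$ (recorded above; see \cite{Da}), $\phi_{L/K}(u)=u$ for $0\le u\le m$ and $\phi_{L/K}(u)=m+(u-m)/p$ for $u\ge m$. Thus, for every positive-depth $\chi$ (i.e. $n\ge1$),
\[
\dep(\lambda_T(\chi))=\phi_{L/K}(n)=
\begin{cases}
n,&1\le n\le m,\\[4pt]
\dfrac np+\dfrac{(p-1)m}{p},&n\ge m,
\end{cases}
\]
which in either case is strictly greater than $n/p=\dep(\chi)$, because $p\ge2$, $m\ge1$ and $n\ge1$. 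Hence depth strictly increases under $\lambda_T$, so in particular it is not preserved, for all positive-depth characters; for $n=0$ both depths vanish (and when $L/K$ is tame, $\phi_{L/K}$ is linear and one recovers Yu's theorem), so the failure is caused entirely by the wild ramification break $m$. The delicate point will be the normalisation bookkeeping --- that the automorphic depth carries the factor $e$ while the Galois depth is governed by the unnormalised $\phi_{L/K}$ --- together with the upper-numbering continuity conventions underlying (i)--(iii); once those are pinned down, what is left is the elementary inequality $\phi_{L/K}(n)>n/p$.
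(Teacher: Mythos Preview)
Your argument is correct and follows the same route as the paper: both reduce to the identity $\dep_{\W_K}(\lambda_T(\chi))=\varphi_{L/K}(e\cdot\dep_T(\chi))$, then plug in the explicit Herbrand function for a cyclic degree-$p$ extension with break at $m$ and check the elementary inequality $\varphi_{L/K}(pd)>d$ for $d>0$ (your two cases $n\le m$ and $n\ge m$ are exactly the paper's cases $d\le m/p$ and $d>m/p$, with $n=pd$). The only difference is packaging: the paper quotes that identity directly from Mishra--Patanayak \cite{MP}, whereas you sketch its proof via Shapiro, local class field theory for $L$, and the Herbrand relation $\W_K^{\,v}\cap\W_L=\W_L^{\,\psi_{L/K}(v)}$; this is instructive but not a different strategy, and in your write-up you could simply cite \cite{MP} for that step and shorten considerably.
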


\begin{proof} We have the elegant recent formula of Mishra and Patanayak \cite{MP}:
\begin{equation}\label{mp}
\varphi_{L/K}(e \cdot \dep_T(\chi))  = \dep_{\W_K}(\lambda_T(\chi))
\end{equation}
where $\varphi_{L/K}$ is the Hasse-Herbrand function, 
%\cite[V.\S 6]{Ne}
 $\chi$ is a character of $T$, $\dep_T(\chi)$ is the depth of $\chi$, $\dep_{\W_K}(\lambda_T(\chi)$ is the depth of the Langlands parameter
$\lambda_T(\chi)$, and $e = e(L/K)$ is the ramification index.    If $u$ is a real number $\geq -1$, $G_u$ denotes the ramification group $G_i$, where $i$ is the smallest integer $\geq u$.   Then the Hasse-Herbrand function is
\[
\varphi_{L/K}(u) = \int_0^u\frac{1}{G_0:G_t)}dt.
\]  

\medskip

We will write $d: = \dep_T(\chi)$.

\emph{First case}.  We suppose  that  $d > m/p$.   We apply the formula (\ref{mp}):
\begin{eqnarray*}
\dep_{\W_K}(\lambda_T(\chi)) & = & \varphi_{L/K}(e \cdot \dep_T(\chi)\\
 & = & \varphi_{L/K}(pd)  \\ 
& = &\int_0^{pd} \frac {1}{(G_0 : G_t)} dt   \\ 
& = &\int_0^m 1 dt + \int_{m}^{pd} \frac{1}{p} dt    \\
& = & m +  (pd - m)/p \\
& =  & d + m(1-1/p)\\
& > & \dep_T(\chi)
\end{eqnarray*} 

\emph{Second case}. We suppose that $0 < d \leq m/p$.   We have 
\begin{eqnarray*}
\dep_{\W_K}(\lambda_T(\chi)) & = & \varphi_{L/K}(e \cdot \dep_T(\chi)\\
 & = & \varphi_{L/K}(pd)  \\ 
& = &\int_0^{pd} \frac {1}{(G_0 : G_t)} dt   \\ 
& =  & pd\\
& >  & \dep_T(\chi)
\end{eqnarray*}    
\end{proof}

\begin{cor}  Let $K$ be a local field of characteristic $2$, and let $L/K$ be a totally ramified quadratic extension: there are countably many of these, with ramification breaks given by 
$m = 1, 3, 5, 7, \ldots$.  In the LLC for tori, the depth, for all positive depth characters, is not preserved.     
\end{cor}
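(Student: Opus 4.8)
The plan is to deduce the Corollary from the Theorem above, the only additional input being the classification of totally ramified quadratic extensions of a local field $K$ of characteristic $2$ by Artin--Schreier theory. First I would recall that, since $K$ has characteristic $2$, every separable quadratic extension $L/K$ is cyclic of degree $2$, and Artin--Schreier theory (in the form used in \cite[\S6]{Da}) sets up a bijection between such extensions and the $\F_2$-lines $D$ in $\overline{K}=K/\wp(K)$, sending $D$ to $L=K(\wp^{-1}(D))$. Hence every quadratic $L/K$ is of the shape treated before the Theorem, and the task reduces to deciding which lines give totally ramified extensions and which ramification breaks occur.

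Next I would isolate the line $\overline{\fo}$. One checks that the natural map $\fo/\wp(\fo)\to\overline{K}$ is injective --- if $a\in\fo$ and $a=\wp(c)$ with $c\in K$, then $c^2-c\in\fo$ forces $c$ to be integral over $\fo$, hence $c\in\fo$ --- and that $\fo/\wp(\fo)\cong k/\wp(k)\cong\F_2$, because on the residue field $k$ the additive map $\wp$ has kernel $\F_2$ and so image of index $2$. Thus $\overline{\fo}$ is a single $\F_2$-line and $K(\wp^{-1}(\overline{\fo}))$ is the (unique) unramified quadratic extension. Therefore the totally ramified quadratic extensions are precisely the $K(\wp^{-1}(D))$ with $D\neq\overline{\fo}$ --- exactly the extensions to which the Theorem applies with $p=2$ --- and by \cite[\S6]{Da} each carries a single ramification break $m$, a positive integer prime to $2$, i.e.\ odd. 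To see that every odd $m\geq1$ occurs, fix a uniformizer $\varpi$ and note that $\wp(c)$ never has negative odd valuation, since $v_K(\wp(c))=2v_K(c)$ when $v_K(c)<0$ and $v_K(\wp(c))\geq 0$ otherwise; hence $\varpi^{-m}$ (with $m$ odd) is not congruent modulo $\wp(K)$ to any element of $\fp^{-m+1}$, in particular $\varpi^{-m}\notin\wp(K)$, so the $\F_2$-line it spans yields a totally ramified quadratic extension with break exactly $m$. Thus the breaks run through $\{1,3,5,7,\ldots\}$.

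For the count I would use that $\overline{K}=K/\wp(K)$ is countable: writing $K=k((\varpi))$ (a positive-characteristic local field is a field of Laurent series over its residue field), one may take as representatives the polynomials in $\varpi^{-1}$ of odd degree together with a constant in $\fo/\wp(\fo)=\F_2$, since terms of positive valuation lie in $\wp(\fp)=\fp$ and a pole $b^2\varpi^{-2j}$ of even order can be traded, via $\wp(b\varpi^{-j})$, for one of strictly smaller order so the reduction terminates; such representatives form a countable set. A countable $\F_2$-vector space has countably many lines, so there are countably many totally ramified quadratic extensions $L/K$.

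Finally I would invoke the Theorem: for each such $L$, with $T=L^\times$, $p=2$, $e=2$ and odd break $m$, it gives $\dep_{\W_K}(\lambda_T(\chi))>\dep_T(\chi)$ for every character $\chi$ of $T$ of positive depth, so depth is not preserved. I expect no genuine obstacle here: the analytic content is already contained in the Theorem and the Mishra--Patanayak formula (\ref{mp}), and the only point requiring care is the Artin--Schreier bookkeeping together with the verification that the set of ramification breaks is exactly the odd positive integers --- both of which are classical.
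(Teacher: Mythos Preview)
Your argument is correct and matches the paper's approach: the paper gives no separate proof, treating the Corollary as the immediate specialisation of the Theorem to $p=2$. You additionally supply the Artin--Schreier bookkeeping that justifies the side claims on countability and the odd ramification breaks, which the paper simply asserts in the statement.
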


\end{document}